\tikzset{join/.code=\tikzset{after node path={%
\ifx\tikzchainprevious\pgfutil@empty\else(\tikzchainprevious)%
edge[every join]#1(\tikzchaincurrent)\fi}}}
\tikzset{>=stealth',every on chain/.append style={join},
         every join/.style={->}}
\newtheorem{thm}{Theorem}[section]   % Numbered within each section
\newtheorem{prop}[thm]{Proposition}  % Numbered along with thm
\theoremstyle{definition} 
\newtheorem{defn}[thm]{Definition}   % Numbered along with thm
\newtheorem{ex}[thm]{Example}        % Numbered along with thm
\newtheorem{rmk}[thm]{Remark}
\newtheorem{notation}[thm]{Notation}
\newcommand{\jac}{\mathcal{J}}
\newcommand{\NM}{\mathcal{N}\hspace{-1mm}\mathcal{M}}
\newcommand{\height}{\operatorname{ht}}
\newcommand\independent{\protect\mathpalette{\protect\independenT}{\perp}}
\def\independenT#1#2{\mathrel{\rlap{$#1#2$}\mkern5mu{#1#2}}}
\newcommand{\mc}{\ensuremath{\mathcal}}
\newcommand{\ol}{\ensuremath{\overline}}
\newcommand{\ZZ}{\ensuremath{\mathbb{Z}}}
\newcommand{\RR}{\ensuremath{\mathbb{R}}}
\newcommand{\PP}{\ensuremath{\mathbb{P}}}
\newcommand{\QQ}{\ensuremath{\mathbb{Q}}}
\newcommand{\FF}{\ensuremath{\mathbb{F}}}
\newcommand{\CC}{\ensuremath{\mathbb{C}}}
\newcommand{\NN}{\ensuremath{\mathbb{N}}}
\title{Computing Algebraic Matroids}
\author{Zvi Rosen}
\address{Department of Mathematics, University of California, Berkeley, CA 94720}
\email{zhrosen@math.berkeley.edu}
\date{\today}                                           % Activate to display a given date or no date
\begin{document}
\maketitle

\begin{abstract}
An affine variety induces the structure of an \emph{algebraic matroid} on the set of coordinates of the ambient space. The matroid has two natural decorations: a {\it circuit polynomial} attached to each circuit, and the degree of the projection map to each base, called the {\it base degree}. 
Decorated algebraic matroids can be computed via symbolic computation using Gr\"{o}bner bases, or through linear algebra in the space of differentials (with decorations calculated using numerical algebraic geometry). Both algorithms are developed here. Failure of the second algorithm occurs on a subvariety called the \emph{non-matroidal} or \emph{NM-locus}. Decorated algebraic matroids have widespread relevance anywhere that coordinates have combinatorial significance. Examples are computed from applied algebra, in algebraic statistics and chemical reaction network theory, as well as more theoretical examples from algebraic geometry and matroid theory.
\end{abstract}

\section{Introduction}

Algebraic matroids have a surprisingly long history. They were studied as early as the 1930's by van der Waerden, in his textbook Moderne Algebra \cite[Chapter VIII]{vdw}, and MacLane in one of his earliest papers on matroids \cite{maclane}. The topic lay dormant until the 70's and 80's, when a number of papers about representability of matroids as algebraic matroids were published: notably by Ingleton and Main \cite{IM75}, Dress and Lovasz \cite{DL87}, the thesis of Piff \cite{Piff}, and extensively by Lindstr\"{o}m (\cite{Lind83,Lind86,Lind87,Lind88}, among others). In recent years, the algebraic matroids of \emph{toric} varieties found application (e.g. in \cite{Hibi}); however, they have been primarily confined to that setting. 

Renewed interest in algebraic matroids comes from the field of matrix completion, starting with \cite{KTTU}, where the set of entries of a low-rank matrix are the ground set of the algebraic matroid associated to the determinantal variety.  In applied algebra in general, coordinates typically carry real-world significance, and the matroid has inherent interest as the dependence structure among those quantities.  Even for varieties arising in pure mathematics, distinguished coordinates may have combinatorial meaning, in which case the matroid also provides insight. From this perspective, algebraic matroids deserve serious study, which will be aided by computational tools.

Matroids have many characterizations; the computation problem tackled in this paper is to take as input a prime ideal in a polynomial ring with specified generators, and return as output the list of bases and circuits of the corresponding matroid.  Two algorithms are presented in Section \ref{alg}: one uses symbolic computation in the polynomial ring with Gr\"{o}bner bases, while the other passes to an equivalent linear matroid, where computations can use simple linear algebra (This technique was implemented in \cite{KTTU}). Numerical algebraic geometry \cite{bertiniBook} also plays a helpful role in attaching algebraic ``decorations" to the matroid.

We use the following notation, in keeping with the standard texts \cite{Ox} and \cite{Welsh}:

\begin{notation} $\mc{M}$ denotes the matroid, $E$ the \emph{ground set} of the matroid, $\mc{I}(\mc{M})$ or $\mc{I}$ the set of \emph{independent sets}, $\mc{B}(\mc{M})$ or $\mc{B}$ the set of \emph{bases}, $\mc{C}(\mc{M})$ or $\mc{C}$ the set of \emph{circuits}, and given $S \subset E$, $\rho(S)$ denotes the value of the \emph{matroid rank} function on $S$. We will say that two matroids are \emph{identical}, if the obvious map of ground sets induces bijection on $\mc{I},\mc{B},\mc{C},\rho$, etc.

\begin{comment}
\centerline{\begin{tabular}{ll}
$\mc{M}$ & Matroid. \\
$E$ & Ground set of the matroid. \\
$\mc{B}(\mc{M})$ & Bases of the matroid. \\
$\mc{C}(\mc{M})$ & Circuits of the matroid. \\
$\rho(S) : S \subset E$ & Rank of the matroid.
\end{tabular}}\end{comment}
\end{notation}

\begin{defn}

Let $k$ be a field, and $R =k[x_1,\ldots,x_n]$, a polynomial ring. Let $P \subseteq k[x_1,\ldots,x_n]$ be a prime ideal. The ring $S = k[x_1,\ldots,x_n]/ P $ is an integral domain, so the function field $K = $ Frac$(S)$ is well-defined.  Let $E = \{\ol{x_1},\ldots,\ol{x_n}\} \subset K$ be the image of the variables under the composition of the quotient and the injection $\varphi: k \to S \to K$. Independence is defined as usual in an algebraic matroid: algebraic independence over the ground field $k$. $\mc{M}(P)$ denotes the matroid obtained from a prime ideal in this manner.
\end{defn}

In fact, every algebraic matroid $\mc{M}$ can be obtained as $\mc{M}(P)$ for some prime ideal $P$. Start from an algebraic matroid $\mc{M}$ of size $n$ with ground set contained in $K/k$. Set the ground set $E$ to be the image of the variables in a ring map $\phi_E: k[x_1,\ldots,x_n] \to K$. The image of the induced map of varieties is an irreducible variety in $k^n$. The associated prime ideal $P$, obtainable by implicitization satisfies $\mc{M} = \mc{M}(P)$.  Despite this property, it is often convenient to study the matroid of a variety purely in terms of the variety's parametrization. Given a map $\phi: k[x_1, \ldots,x_n] \to K$, the notation $\mc{M}(\phi)$ will then refer to the algebraic matroid with ground set $\{ \phi(x_i) : i =1,\ldots,n \}$.

\subsection{Decorated Bases and Circuits}
\label{subsec:dbc}

We can infuse more of the algebraic structure of the ideal into the matroid via ``matroid decorations". This approach of enhancing a matroid with more information has been taken in various forms: oriented matroids \cite{orient}, arithmetic matroids \cite{ari}, valuated matroids \cite{val} and matroids over rings \cite{matring}, to name a few. Circuits of algebraic matroids have a natural decoration, based on the following fact (\cite[Lemma 5.6]{KRT13}):

\vspace{2mm}

\begin{tabular}{p{.25\textwidth}cp{.6\textwidth}}
$C = \{x_{i_1},\ldots,x_{i_k}\}$ is a 

circuit of $\mc{M}(P)$ & $\iff$ & The ideal $I \cap k[C]$ is principal with generator $\theta_C$ 

s.t. support$(\theta_C) = C.$
\end{tabular} 
\\

The generator $\theta_C$ of the principal ideal, called a \emph{circuit polynomial} is unique up to unit. This invariant was used by Dress and Lovasz in \cite{DL87} as well as Lindstr\"{o}m in \cite{Lind87} in the process of proving structural facts about algebraic matroids. More recently in \cite{KRT13}, the circuit polynomials are studied as objects of interest in their own right.  If the polynomial itself is too unwieldy, we may prefer to record only some aspect of the polynomial: (1) The {\bf Newton Polytope}  associated to the polynomial.
(2) {\bf Top-Degree}: This aspect is explored in \cite{KRT13}. It is the vector in $\NN^n$ given by $(\deg_{x_i} \theta_C)_{i \in [n]}$. Equivalently, it is the outer vertex of the tightest bounding box for the Newton polytope. When we try to construct points in a variety based on subsets of the coordinates, the top-degree allows us to determine the cardinality of the solution set.
(3) {\bf Degree}: A natural concise invariant.

\begin{rmk} An open question related to the degrees is as follows:
\emph{What constraints are there for a set of integers attached to the circuits of a matroid to be the degrees of circuit polynomials for some algebraic matroid?} If $\mc{M}$ is a matroid with corank $1$, then there is a unique circuit: the full set of variables. Assuming there is more than one variable, we can find an irreducible circuit polynomial of any degree involving the circuit variables.  On the other hand, suppose that a matroid has loops. Over an algebraically closed field, the degree of a loop's polynomial is forced to $1$, since a polynomial in one variable breaks into linear factors and the polynomial must be irreducible. Over $\QQ$, on the other hand, any degree is possible for a loop, since there are irreducible polynomials in every degree. (See Section \ref{sec:matrep} for another example where the field places constraints on the matroid decorations.) We plan to explore this question further in later research.
\end{rmk}

\noindent The bases of an algebraic matroid have the following nice property (cf. \cite[Definition 2.6]{KRT13}):  

\vspace{1mm}

\hspace{-5mm}\begin{tabular}{rcp{.6\textwidth}}
$\{x_{i_1},\ldots,x_{i_k}\}$ is a base of $\mc{M}(P)$ & $\iff$ &  the projection from the variety onto the $i_1,\ldots,i_k$-th  

coordinates is surjective with finite fibers.
\end{tabular} 

The cardinality of the generic fiber (or, equivalently, the degree of the projection) is the decoration on the bases and will be referred to as the \emph{base degree}.

\section{Algorithms and Software}
\label{alg}

We will outline two strategies for computing decorated algebraic matroids: the \emph{symbolic algorithm} in Section \ref{subsec:symbolic} and the \emph{linear algorithm} in Section \ref{subsec:linear}. In both approaches, an ideal in a polynomial ring is taken, and a list of bases with base degrees and a list of circuits with circuit polynomial (or alternative decoration) are returned. Within each of these regimes, we employ techniques on two different levels: \emph{oracles}, which extract matroid features from the ideal, and {\it matroid algorithms}, which turn one type of matroid data (e.g. rank) into another type (e.g. circuits).  

Matroid algorithms are well-studied, and will not be the focus of the paper, though our software does rely upon them. In most cases, we use na\"{i}ve matroid algorithms, though we have also implemented sophisticated methods like ReverseSearch \cite{reverse} and the circuit enumeration algorithm of Boros {\it et al.} \cite{begk03} to list bases and circuits respectively. These occasionally perform better than the na\"{i}ve algorithms; however, in the majority of cases, they do not accelerate the computation.

\subsection{Symbolic Algorithm} \label{subsec:symbolic} In the symbolic realm, elimination is at the core of the computations. It is used in the \emph{rank oracle}, which will be justified by the following proposition:
\begin{prop}[Rank Oracle]
\label{rank}
 Let $\mc{M}(P)$ be a matroid as above, and $S \subset E$.
\[ \rho(S) = |S| - \height(P \cap k[S]). \]
where $\height$ denotes the height of the ideal. 
\end{prop}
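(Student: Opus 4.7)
The plan is to translate the matroid-theoretic quantity $\rho(S)$ into commutative-algebraic language and then apply the standard dimension equality for prime ideals in a polynomial ring. By definition, $\rho(S)$ is the transcendence degree over $k$ of the subfield $k(\{\bar{x}_i : x_i \in S\}) \subset K$. So the task is to express this transcendence degree as $|S| - \height(P \cap k[S])$, where I write $k[S] := k[x_i : x_i \in S]$ for the polynomial subring of $k[x_1,\ldots,x_n]$.

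First I would set up the relevant subring. Let $Q := P \cap k[S]$. Since $P$ is prime and the intersection of a prime ideal with a subring is prime, $Q$ is a prime ideal of $k[S]$, so $A := k[S]/Q$ is an integral domain. The inclusion $k[S] \hookrightarrow k[x_1,\ldots,x_n]$ descends to an injection $A \hookrightarrow k[x_1,\ldots,x_n]/P$, because an element of $k[S]$ that lies in $P$ lies in $P \cap k[S] = Q$. Passing to fraction fields, $\mathrm{Frac}(A)$ embeds into $K$, and under this embedding its image is precisely $k(\{\bar{x}_i : x_i \in S\})$, the subfield generated by the images of the variables of $S$. Therefore
\[ \rho(S) \;=\; \mathrm{trdeg}_k\, \mathrm{Frac}(A). \]

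Next I would invoke two standard facts about finitely generated $k$-algebras. Since $A$ is a finitely generated integral domain over the field $k$, its Krull dimension coincides with the transcendence degree of its fraction field: $\dim A = \mathrm{trdeg}_k \mathrm{Frac}(A)$. And since $k[S]$ is a polynomial ring over a field (hence a Cohen--Macaulay, catenary domain of dimension $|S|$), every prime $Q \subset k[S]$ satisfies the dimension formula $\height(Q) + \dim(k[S]/Q) = |S|$. Combining these,
\[ \rho(S) \;=\; \mathrm{trdeg}_k \mathrm{Frac}(A) \;=\; \dim A \;=\; |S| - \height(Q) \;=\; |S| - \height(P \cap k[S]), \]
which is the claim.

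There is really no deep obstacle: the argument is an unpacking of definitions glued together by the Noether normalization / dimension theory for affine domains. The only point requiring a bit of care is verifying that $\mathrm{Frac}(A)$ truly equals $k(\{\bar{x}_i : x_i \in S\})$ inside $K$ and not merely sits inside it --- this follows because $A$ is generated as a $k$-algebra by the $\bar{x}_i$ with $x_i \in S$, so its fraction field is generated as a field by the same elements. Once that identification is made, the rest is the standard height--dimension duality in a polynomial ring.
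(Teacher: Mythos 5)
Your argument is correct: the paper itself states Proposition \ref{rank} without proof, and your chain of identifications --- $\rho(S) = \operatorname{trdeg}_k \mathrm{Frac}(k[S]/(P\cap k[S]))$, equality of transcendence degree with Krull dimension for an affine domain, and the height--dimension formula $\height(Q) + \dim(k[S]/Q) = |S|$ in a polynomial ring --- is exactly the standard justification the author leaves implicit. Your extra care in checking that $\mathrm{Frac}(k[S]/Q)$ equals (not merely embeds in) the subfield of $K$ generated by $\{\bar{x}_i : x_i \in S\}$ is the right point to flag, and it is handled correctly.
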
 
\noindent Based on this, elimination of $E\setminus S$ followed by computation of height will serve as a rank oracle. Matroid algorithms use the rank oracle to enumerate bases and circuits.  For circuits, we may also use the characterization of circuits in Section \ref{subsec:dbc} to define a \emph{circuit oracle}: test a set $S$ by (1) computing the elimination ideal $P \cap k[S]$, and (2) checking that the generator has full support on the variables of $S$. 

The decoration of the circuits is a natural byproduct of the symbolic algorithm. For the base degree, fix a base $B  = \{x_{i_1},\ldots,x_{i_d}\}$.  Under the projection map from $k^n \to k^d$ onto the coordinates in the base, the preimage of a generic point $(\lambda_1,\ldots,\lambda_d)$, generated randomly, is a subspace with defining ideal $I_B = \langle x_{i_s} - \lambda_s \: | \: s= 1,\ldots,d \rangle$. The fiber of the projection then has defining ideal $P + I_B$, and the degree of that ideal is the base degree.

We have written a file {\tt matroids.m2}, which implements the symbolic approach in the commutative algebra platform Macaulay2 \cite{M2}. This file can be found at:  \url{http://math.berkeley.edu/~zhrosen/matroids.html}. 
 
 Some important commands are listed in Table \ref{m2code}.\begin{longtable}{|r|l|} \hline
{\tt bases} & \begin{tabular}{l} \emph{Input:} Polynomial Ring, Ideal. \\ \emph{Output:} List of bases of the matroid $\mc{M}(I)$.\end{tabular} \\ \hline
{\tt decoratedBases} & \begin{tabular}{l} \emph{Input:} Polynomial Ring, Ideal. \\ \emph{Output:} List of bases of the matroid $\mc{M}(I)$ with base degree.\end{tabular} \\ \hline
{\tt circuits} & \begin{tabular}{l} \emph{Input:} Polynomial Ring, Ideal. \\ \emph{Output:} List of circuits of the matroid $\mc{M}(I)$. 
\end{tabular} \\ \hline
{\tt pCircuits} & \begin{tabular}{p{.7\textwidth}} \emph{Input:} Polynomial Ring, Ideal. \\ \emph{Output:} List of ordered pairs: circuits of the matroid $\mc{M}(I)$ together with circuit polynomials. \\ Degree-decorated circuits can be computed with:\\ {\tt apply(pCircuits, c -> (c\_0, degree(c\_1)))}. \end{tabular} \\ \hline
{\tt topDegree} & \begin{tabular}{p{.7\textwidth}} \emph{Input:} Polynomial Ring, Polynomial. \\ \emph{Output:} Top-degree vector of the polynomial w.r.t. the variables of the ring. \\ Top-degree decorated circuits can be computed with:\\ {\tt apply(pCircuits(Ring, Ideal), c -> (c\_0, topDegree(Ring,c\_1)))}. \end{tabular} \\ \hline 
\caption{Commands to compute algebraic matroids using {\tt matroids.m2}}
\label{m2code}
\end{longtable}

The code has two sources of complexity - the complexity of elimination of variables via Gr\"{o}bner bases, and the combinatorial complexity of listing and testing all potential bases, resp. circuits. For this reason, the code has difficulty with large ground sets, large-rank matroids, and ideals with high degree generators.  In trials, the code works quickly for matroids with $|E| \leq 18, \rho(\mc{M}) \leq 6$, and generators in degree $\leq 4$. For larger or higher-rank matroids, one should use a more tailored approach, as in Example \ref{kaie}.

\subsection{Linear Algebra} \label{subsec:linear}

A classical result in the study of algebraic matroids states: algebraic matroids defined over a field $k$ of characteristic zero can also be realized as a linear matroid over a field $k(T)$ where $T$ is a finite set of transcendentals (\cite[Proposition 6.7.10]{Ox}).  In particular, when $P$ is defined by generators $\langle f_1,\ldots, f_m \rangle \subseteq k[x_1,\ldots,x_n]$, we define the Jacobian matrix $\jac(P)$ as:
\begin{equation}
\left(\frac{\partial f_i}{\partial x_j}\right) \: : \: 1 \leq i \leq m, \: 1 \leq j \leq n.
\end{equation}
This matrix, when considered as a matroid with columns as the ground set and linear independence over Frac$(k[{\bf x}]/P)$ defining the independent sets $\mc{I}$, represents the {\bf dual matroid} to $\mc{M}(P)$. 
Though the derivatives are computed symbolically in the polynomial ring $k[x_1,\ldots,x_n]$, we then consider linear algebra over the function field of the variety.

When the variety is defined by a parametrization $\phi(t_1,\ldots,t_d) = (g_1({\bf t}),\ldots, g_n({\bf t}))$, we write $\jac(\phi)$ for the Jacobian matrix of the following form:
\begin{equation}
\left(\frac{\partial g_j}{\partial t_i}\right) \: : \: 1 \leq i \leq d, \: 1 \leq j \leq n.
\end{equation}
Note that the indices in top and bottom are flipped. This matrix, again setting the columns as $E$ and using linear independence over Frac$(k[{\bf x}]/P)$ to define $\mc{I}$, represents $\mc{M}(P)$ (not its dual). Since symbolic computation is more costly, certain values of $\overline{x_1},\ldots,\overline{x_n}$, the ambient coordinates, can be substituted for the variables.

\begin{defn}[NM-Locus]
Let the {\it non-matroidal locus} $\NM(I)$ denote the locus of points in $\mc{V}(I)$, at which the specialization of the Jacobian matrix does \emph{not} represent the dual of the algebraic matroid. Similarly, $\NM(\phi)$ is the set of points in the parameter space where the specialization of the Jacobian matrix does not represent the algebraic matroid.
\end{defn}

This pair of definitions specifies the values that should be avoided when specializing the linear matroid. To help describe the non-matroidal locus, we set the following notation: $I_d(M)$ will refer to the ideal generated by $d \times d$ minors of a matrix $M$. Further, $M\{S\}$ denotes the submatrix of $M$ obtained by restricting to the columns with indices in $S$.

\begin{prop} Let $V = \mc{V}(P)$ be a variety of dimension $d$ in an ambient space of dimension $n$, with Jacobian $\mc{J}(P)$ representing the dual of $\mc{M}(P)$. Then $\NM(P)$ is defined by the ideal:
\[ 	I = 	\bigcap_{B \in \mc{B}(\mc{M})} I_{n-d}(\jac(P)\{B\}),   \]
or, equivalently, the intersection of $I_{n-d}(\jac(P)\{S\})$ over all $S$ such that $I_{n-d}(\jac(P)\{S\}) \not\subseteq P$. In the special case where $\mc{J}(P)$ has $n-d$ rows, this is a principal ideal generated by the lcm of all nonzero (mod $P$) maximal minors.
\end{prop}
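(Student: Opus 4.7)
The plan is to interpret the failure of specialization in terms of which bases of the generic Jacobian matroid stay independent after evaluation at $x \in V$, and then translate this into vanishing of minors. Recall that $\jac(P)$, viewed over $K = \mathrm{Frac}(R/P)$, has column rank $n - d$ and realizes the dual matroid $\mc{M}(P)^*$, whose bases are the complements $E \setminus B$ of bases $B \in \mc{B}(\mc{M})$ and have size $n - d$. For a point $x \in V$, the specialized matrix $\jac(P)|_x$ has rank at most the generic rank on every column subset (lower semicontinuity of matrix rank), with equality on a subset $S$ iff some maximal square minor of $\jac(P)\{S\}$ is nonzero at $x$. Since a matroid is determined by its bases, the specialization represents $\mc{M}(P)^*$ iff every dual basis $B^* = E \setminus B$ remains independent at $x$, i.e., iff some $(n-d)\times(n-d)$ minor of $\jac(P)\{B^*\}$ is nonzero at $x$.

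From this reduction, $\NM(P) = \bigcup_{B \in \mc{B}(\mc{M})} \mc{V}\bigl(I_{n-d}(\jac(P)\{E \setminus B\})\bigr)$, so its defining ideal is the intersection $\bigcap_B I_{n-d}(\jac(P)\{E\setminus B\})$, matching the stated formula (the notation $\jac(P)\{B\}$ in the proposition being read as the columns indexed by the corresponding dual basis). For the equivalent form indexed by arbitrary $S$, I would show that $I_{n-d}(\jac(P)\{S\}) \not\subseteq P$ is equivalent to $S$ containing some dual basis $B^*$: a maximal minor of $\jac(P)\{S\}$ is nonzero modulo $P$ exactly when the corresponding $n - d$ columns are linearly independent over $K$, and those columns then form a $B^* \subseteq S$. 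Since $B^* \subseteq S$ implies $I_{n-d}(\jac(P)\{B^*\}) \subseteq I_{n-d}(\jac(P)\{S\})$, adjoining such $S$ to the intersection contributes nothing beyond the bases themselves, so the two descriptions agree.

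The special case is then immediate: when $\jac(P)$ has exactly $n - d$ rows, each $\jac(P)\{S\}$ with $|S| = n - d$ is square with a unique maximal minor $m_S$, so $I_{n-d}(\jac(P)\{S\}) = \langle m_S \rangle$; the intersection of these principal ideals in the UFD $R$ is generated by $\mathrm{lcm}_S\, m_S$ taken over $S$ with $m_S \not\in P$. The main obstacle I expect is being precise about $\NM(P)$ as a defining ideal versus as a set-theoretic locus: the pointwise argument above pins down the radical of the defining ideal, but the stated equality is an equality of ideals, so one must verify that the minor-ideal intersection delivers the right scheme structure (or else interpret the claim up to radical), and in the lcm specialization one must check that the passage from an intersection of principal ideals to a single lcm remains valid after reducing modulo the prime $P$.
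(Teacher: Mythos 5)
Your argument follows the same route as the paper's (very terse) proof: a matroid is determined by its bases, the specialization represents the dual iff each cobase's $(n-d)$-column submatrix retains full rank, i.e.\ some maximal minor survives, and the lcm claim follows from intersecting principal ideals in a UFD. Your version is simply a careful expansion of that sketch, including the correct reading of $\jac(P)\{B\}$ as the columns of the complementary cobase and a reasonable caveat about ideal-theoretic versus set-theoretic equality that the paper does not address.
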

\begin{proof}
A matroid is fully described by its list of bases. Given any cobase of $\mathcal{M}(V)$, the corresponding $m \times (n-d)$ matrix has rank $n-d$, so some $(n-d)\times(n- d)$ minor is nonvanishing. The last fact follows from the properties of intersections of principal ideals.
\end{proof}

\begin{ex} We compute the non-matroidal locus of a torus in $\RR^3$. Let $P = \langle (x^2 + y^2 + z^2 + 3)^2 - 16(x^2 + y^2) \rangle \subseteq \RR[x,y,z]$, the defining ideal for the torus with minor radius $1$ and major radius $2$. The Jacobian $\jac(P)$ is a $1 \times 3$ matrix:
\[ \begin{array}{ccc} x & y & z \\[1mm] [4x^3+4xy^2+4xz^2-20x & 4x^2y+4y^3+4yz^2-20y & 4x^2z+4y^2z+4z^3+12z] \end{array} \]
Since the dual matroid has rank $1$, the non-matroidal locus $\NM(P)$ is a principal ideal generated by the lcm of the entries.
\[ \NM(P) = \langle -x^5yz-2x^3y^3z-xy^5z-2x^3yz^3-2xy^3z^3-xyz^5+2x^3yz+2xy^3z+2xyz^3+15xyz \rangle.\]
We add the ideal to $P$ to find the non-matroidal locus as a subvariety of the torus, and we compute the associated primes:
\[ \langle x,y^2+z^2+4y+3\rangle, \langle x,y^2+z^2-4y+3\rangle, \langle y,x^2+z^2+4x+3\rangle, \langle y,x^2+z^2-4x+3\rangle,\]
\[      \langle z,x^2+y^2-9\rangle, \langle z,x^2+y^2-1\rangle, { \bf \langle z^2+3,x^2+y^2\rangle}, \langle z+1,x^2+y^2-4\rangle, \langle z-1,x^2+y^2-4\rangle.\]
The boldface ideal has empty real variety, but the other $8$ ideals define $8$ circles around the torus, four for each generator of the fundamental group. Specializing at any point on those circles gives the wrong matroid for $\mc{M}(P)$.
\end{ex}

The corresponding statement for parametrized varieties is given in Proposition \ref{paramjac}.
\begin{prop} Let $V = \mc{V}(\phi)$ be a variety of dimension $d$ with Jacobian $\mc{J}(\phi)$ representing $\mc{M}(\phi)$. Then $\NM(\phi)$ is defined by the ideal:
\[ 	I = 	\bigcap_{B \in \mc{B}(\mc{M})} I_d(\jac(\phi)\{B\}),   \]
or, equivalently, the intersection of all nonzero $I_d(\jac(\phi)\{S\})$. This is a principal ideal generated by the lcm of all nonzero maximal minors.
\label{paramjac}
\end{prop}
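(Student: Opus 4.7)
The plan is to mirror the argument for the previous proposition, using the fact that now $\jac(\phi)$ is a $d\times n$ matrix whose columns realize $\mc{M}(\phi)$ itself (rather than its dual) as a linear matroid over $\mathrm{Frac}(k[t_1,\ldots,t_d])$. Since the variety has dimension $d$, this linear matroid has rank $d$, and a $d$-subset $B$ is a basis of $\mc{M}(\phi)$ if and only if the $d\times d$ submatrix $\jac(\phi)\{B\}$ has nonzero determinant as a polynomial in $t_1,\ldots,t_d$.

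First I would record the key specialization observation. A matroid is determined by its list of bases, so the specialization at a parameter point $p$ fails to represent $\mc{M}(\phi)$ exactly when some basis of $\mc{M}(\phi)$ fails to be a basis of the specialized linear matroid (non-bases have identically vanishing $d\times d$ minors, so they cannot accidentally become bases upon specialization). Hence $p \in \NM(\phi)$ if and only if $\det\jac(\phi)\{B\}(p) = 0$ for some $B \in \mc{B}(\mc{M})$, i.e. if and only if $p$ lies in the vanishing locus of $\bigcap_{B \in \mc{B}(\mc{M})} \langle \det \jac(\phi)\{B\}\rangle$. Since each $I_d(\jac(\phi)\{B\})$ is the principal ideal generated by that single determinant, this establishes the first description.

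Next I would check the "equivalently" clause. For a general subset $S$ with $|S|\ge d$, the ideal $I_d(\jac(\phi)\{S\})$ is generated by the determinants $\det\jac(\phi)\{B\}$ over all $d$-subsets $B \subseteq S$. A $d$-subset $B$ either is a basis of $\mc{M}(\phi)$ (contributing its determinant) or has dependent columns (contributing $0$). Thus $I_d(\jac(\phi)\{S\}) \neq 0$ precisely when $S$ contains some basis, and in that case $I_d(\jac(\phi)\{S\}) = \sum_{B \subseteq S,\, B\in\mc{B}(\mc{M})} \langle \det\jac(\phi)\{B\}\rangle$. Intersecting these larger ideals over all such $S$ yields the same thing as intersecting the principal ideals over just the bases, because each $\langle \det\jac(\phi)\{B\}\rangle$ appears as a summand (for $S = B$) and enlarging ideals can only make the intersection larger when taken together with the original family.

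Finally, principality follows from the UFD structure of the polynomial ring: the intersection $\bigcap_{B \in \mc{B}(\mc{M})} \langle \det \jac(\phi)\{B\}\rangle$ of principal ideals in a UFD equals $\bigl\langle \mathrm{lcm}_{B}\, \det\jac(\phi)\{B\}\bigr\rangle$. The main thing requiring care is the specialization step, namely ruling out "false" bases in the specialized matroid; but as noted, non-bases have identically zero $d\times d$ minors, so this direction is automatic, and the proof is symmetric to that of the ideal-presented case.
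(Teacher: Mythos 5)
Your proof is correct and follows essentially the same route as the paper, which proves the ideal-presented version by noting that a matroid is determined by its bases, that each (co)basis contributes a nonvanishing maximal minor, and that the lcm statement follows from intersecting principal ideals; the parametrized case is left as the evident analogue, which is exactly what you carry out. Your added observations (non-bases have identically vanishing minors so cannot become bases under specialization, and the ideals $I_d(\jac(\phi)\{S\})$ for larger $S$ contain those of the bases they contain, so they do not change the intersection) are correct refinements of the same argument.
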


For the linear algorithm, the goal is to specialize the Jacobian at a point so that we can perform linear algebra to compute the matroid. The propositions imply that selecting a non-root of the ideal $I$ guarantees the desired matroid. From the formulation, it is clear that the non-matroidal locus has positive codimension in the ambient variety; therefore, selection of a generic point is sufficient to guarantee that the NM-locus is avoided.

For parametrized varieties, the Jacobian at a generic point is obtained simply by plugging in random numbers for each parameter. For varieties defined by ideals, a generic point can be computed using numerical algebraic geometry software; we use {\tt Bertini} \cite{Bertini}. 

\begin{rmk}
When we select points in the variety numerically, we often need to use very high accuracy. A set of columns may have minors with polynomial values that evaluate to zero when passed to the quotient; however, when we specialize to a point with low accuracy, we may find that the minors corresponding to these columns are $\gg 0$. The required accuracy depends on the degree of the ideal generators or polynomial parametrizations.
\end{rmk}

Software that computes linear matroids is then used to transform the matrix into a list of circuits and bases; we use numerical linear algebra in {\tt Sage} \cite{sage}, as well as its {\tt Matroid} implementation. These lists are translated into a set of $\{0,1\}$ vectors that are sent back to Bertini, which computes coordinate projections in {\tt TrackType:5}. Bertini performs each projection with base degree for the list of basis vectors, and degree or top-degree of the circuit polynomial for the list of circuit vectors.  Bertini returns these values using numerical algebraic geometry techniques (see \cite{bertiniBook} for more details). In this mode of computation, Gr\"{o}bner basis complexity is avoided; however, combinatorial complexity is still a fixture. The original calculation of the witness set can also be expensive for a high-dimensional variety and ambient space. Examples of code for Sage and Bertini are included in the aforementioned website.

\section{Applications}

In this section, examples from different areas of mathematics will demonstrate that decorated algebraic matroids are natural and provide fundamental insight into the independence structure of a system of distinguished coordinates. As mentioned earlier this is an approach which has already been explored in matrix completion \cite{KTTU}, and which can be applied much more broadly.

\subsection{Algebraic Statistics}

The first area we look into is the field of algebraic statistics. Statistical models have distinguished coordinates describing the probability of an event; the relationship among those coordinates is therefore an obvious and natural question. The decorated algebraic matroid is the way to succinctly describe the independence structure among those probabilities.  In this section, we discuss two specific models from \cite{SW14,KRS14}.

\begin{ex}[$PL_4$ matroid] $\begin{array}{|ccccc|} \hline
\mc{M}(PL_4): & |E| = 24, & \rho = 4, & |\mc{B}| = 10560, & |\mc{C}| = 41346. \\ \hline
\end{array}$

Consider a random variable $X$ which takes as values the permutations of the letters $1 \cdots n$, which correspond to rankings of a set of preferences. Probability functions $p_{\pi}: \Theta \to [0,1]$ assign probabilities to each ranking as a function of some parameters $\theta_1,\ldots,\theta_k$.

\begin{comment}
\hspace{-4mm}
\begin{minipage}{.5\textwidth}
\vspace{5mm}
The geometry of the variety defined by the image of the $p_\pi$'s in $[0,1]^{n!}$ is the object of interest. We forget the cube and consider the variety in $\CC^n$ for simplicity.  In \cite[Section 7]{SW14}, the Plackett-Luce model is defined by:
\[ p_\pi \mapsto \prod_{i = 1}^{n-1} \frac{1}{\sum_{j = 1}^i \theta_{\pi(j)}}.\]
Since algebraic dependence is not changed by reciprocating elements, we instead consider $p_\pi \mapsto \prod_{i = 1}^{n-1}(\sum_{j = 1}^i \theta_{\pi(j)})$ for easier computation.  The variety defining the Plackett-Luce model for $n = 4$ is $4$-dimensional with degree $27$; the corresponding ideal is minimally generated by $9$ polynomials of degree $1$ and $36$ degree-$2$ polynomials.  Since $\mc{M}(PL_4)$ has rank $4$, the matroid may be represented by an affine representation in $3$-space, as in Figure \ref{pl4fig}.
\end{minipage}
\begin{minipage}{.5\textwidth}
\centering
\includegraphics[scale=.5]{plackettLuce.png}
\captionof{figure}{Affine Representation of $\mc{M}(PL_4)$}
\label{pl4fig}
\end{minipage}
\end{comment}

The geometry of the variety defined by the image of the $p_\pi$'s in $[0,1]^{n!}$ is the object of interest. We forget the cube and consider the variety in $\CC^{n!}$ for simplicity.  In \cite[Section 7]{SW14}, the Plackett-Luce model is defined by:
\[ p_\pi \mapsto \prod_{i = 1}^{n-1} \frac{1}{\sum_{j = 1}^i \theta_{\pi(j)}}.\]
Since algebraic dependence is not changed by reciprocating elements, we instead consider $p_\pi \mapsto \prod_{i = 1}^{n-1}(\sum_{j = 1}^i \theta_{\pi(j)})$ for easier computation.  The variety defining the Plackett-Luce model for $n = 4$ is $4$-dimensional with degree $27$; the corresponding ideal is minimally generated by $9$ polynomials of degree $1$ and $36$ degree-$2$ polynomials.  Since $\mc{M}(PL_4)$ has rank $4$, the matroid may be represented by an affine representation in $3$-space, depicted by its Schlegel diagram in Figure \ref{pl4fig}, made with Polymake \cite{polymake}.

\begin{figure}[!h]
\includegraphics[scale=.6]{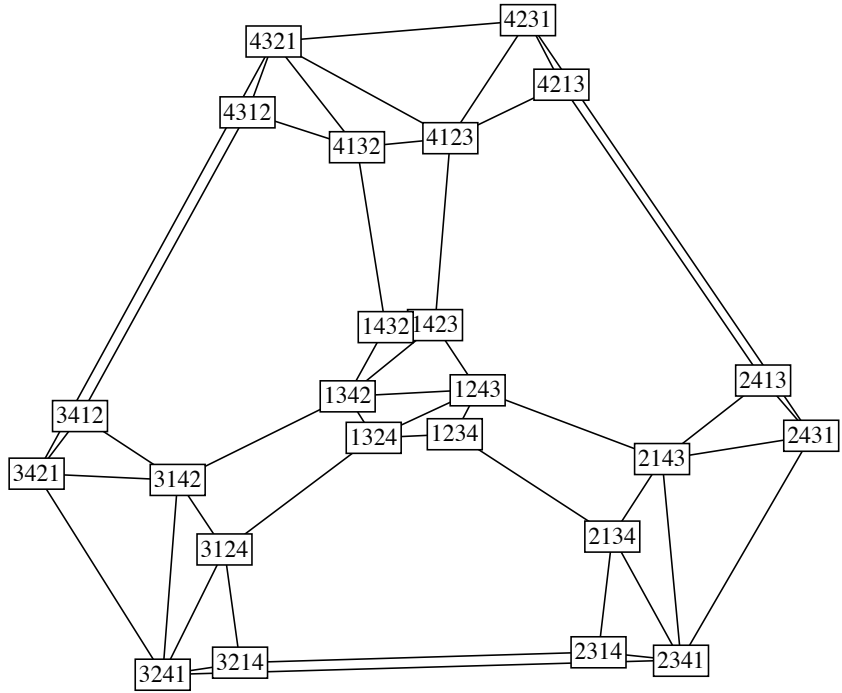}
\caption{Schlegel Diagram for the Affine Representation of $\mc{M}(PL_4)$}
\label{pl4fig}
\end{figure}

\begin{comment}
\begin{figure}[!h]
\includegraphics[scale=.5]{plackettLuce.png}
\caption{Schlegel Diagram for the Affine Representation of $\mc{M}(PL_4)$}
\label{pl4fig}
\end{figure}
\end{comment}

At first glance, this arrangement looks like the vertices of a permutahedron; however, some sets expected to be contained in facets are in fact full-dimensional. The polytope has four hexagonal facets given by fixing the last element of the ranking and acting with $S_3$ on the others. The four facets of the permutahedron corresponding to fixing the first element are triangulated. This may be due to the fact that in the parametrization the last element of the ranking does not make an appearance. (e.g. $p_{1234} \mapsto x_1(x_1 + x_2)(x_1 + x_2 + x_3)$).
The full matroid is too large for computation; instead, we use combinatorial tools in {\tt Sage} to find representatives of each base and circuit modulo the natural $S_4$-action on the set of variables. We will refer to distinct bases and circuits modulo the group action as \emph{base classes} and \emph{circuit classes}, respectively.

\begin{enumerate}[1.]
\item \emph{Decorated Circuits}: There are five circuit classes of size $4$ with orbit size $6,12,12,12,24$:

\begin{footnotesize}
\begin{longtable}{|p{.23\textwidth}|m{.5\textwidth}|c|} \hline
{\bf Type:} & {\bf Polynomial:} & {\bf Orbit Size:} \\ \hline
\hspace{5mm} \raisebox{-.5\height}{\includegraphics[scale = .55]{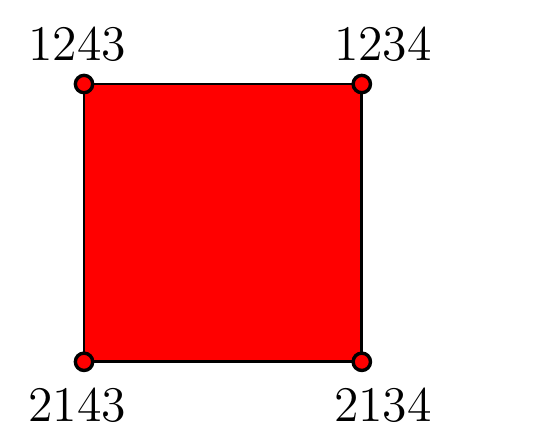}} & $p_{1243}p_{2134}-p_{1234}p_{2143}$ & 6  \\ \hline
\raisebox{-.5\height}{\includegraphics[scale = .5]{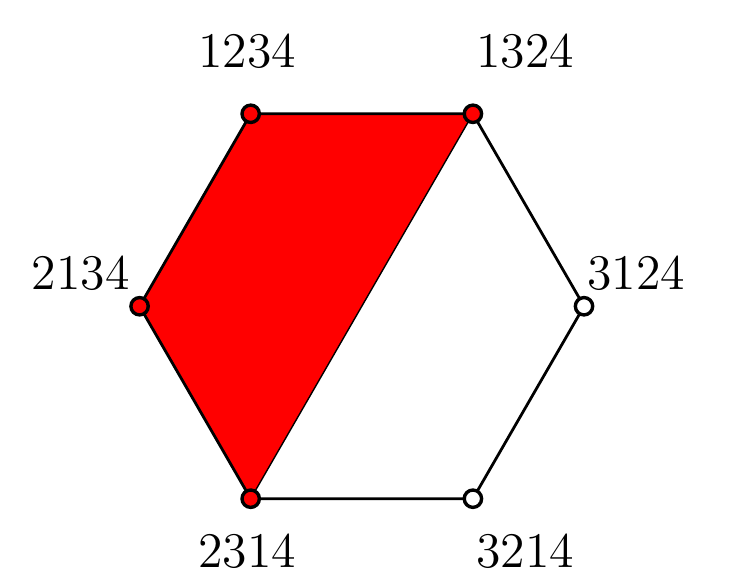}} & $p_{1234}^2p_{2134}-p_{1234}p_{1324}p_{2134}-p_{1234}p_{2134}^2-p_{1324}p_{2134}^2+p_{1234}^2p_{2314}+p_{1234}p_{2134}p_{2314}$ & 12 \\ \hline

\raisebox{-.5\height}{\includegraphics[scale = .5]{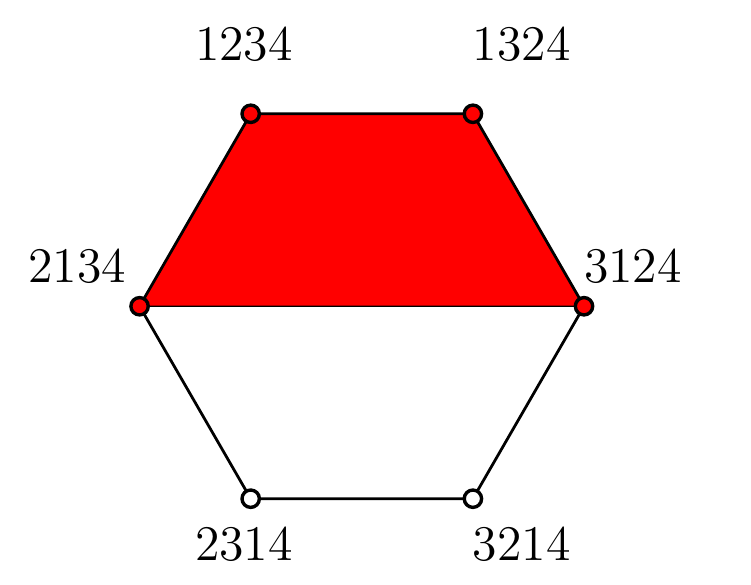}} & $p_{1234}^2p_{1324}-p_{1234}p_{1324}^2-p_{1324}^2p_{2134}+p_{1234}^2p_{3124}$ & 12 \\ \hline

\raisebox{-.5\height}{\includegraphics[scale = .5]{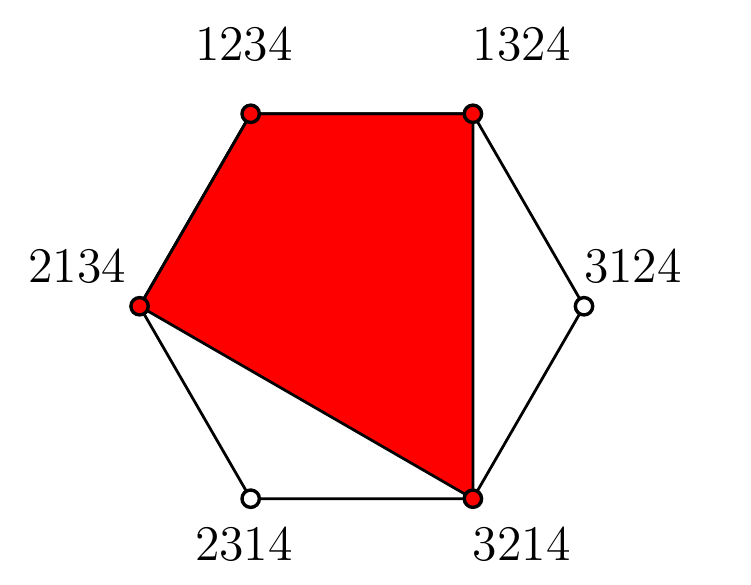}} &
 $p_{1234}^4-2p_{1234}^3p_{1324}+p_{1234}^2p_{1324}^2-p_{1234}^3p_{2134}-p_{1234}^2p_{1324}p_{2134}+2p_{1234}p_{1324}^2p_{2134}+p_{1234}p_{1324}p_{2134}^2+p_{1324}^2p_{2134}^2-p_{1234}^3p_{3214}-p_{1234}^2p_{2134}p_{3214}$ & 24 \\ \hline
 
 \raisebox{-.5\height}{\includegraphics[scale = .5]{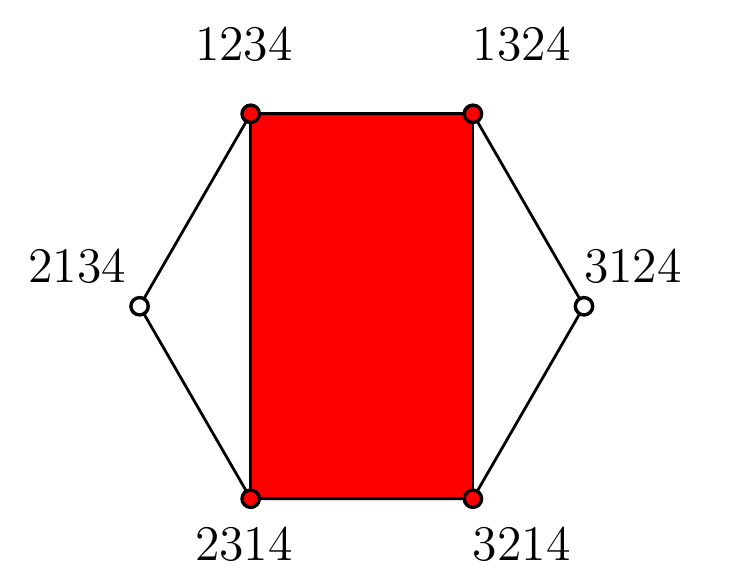}} &
 $p_{1234}^2p_{2314}-2p_{1234}p_{1324}p_{2314}+p_{1324}^2p_{2314}-p_{1324} p_{2314}^2 + p_{1234}^2p_{3214}- 2p_{1234}p_{1324}p_{3214} +p_{1324}^2p_{3214}+ p_{1234}p_{2314}p_{3214}+ p_{1324}p_{2314}p_{3214}-p_{1234}p_{3214}^2$ & 12 \\ \hline
\end{longtable}
\end{footnotesize}

There are 1720 circuit classes of size $5$, each of which has orbit size $24$, yielding an additional $41,280$ circuits. Important to note here: the Macaulay2 computation ran for 10 days without producing circuit polynomials. Bertini was able to produce a witness set in approximately 8 hours and compute 1720 projections in approximately 6 hours (working in parallel). The degrees of the circuit polynomials are recorded in Figure \ref{fig:pl4Circs}.

\item \emph{Decorated Bases}: The 464 classes of size $4$ that are not circuits are bases. The computation of base decorations produces the distribution of base degrees in Figure \ref{Fig:pl4bases}.

\begin{figure}[!h]
\begin{minipage}{.48\textwidth}
\centering
\begin{tikzpicture}
\begin{axis}[ybar, x tick label style={/pgf/number format/1000 sep=}, bar width=6pt]
\addplot plot coordinates
{(3,4) (4,14) (5,8) (6,64) (7,69) (8,92) (9,119) (10,161) (11,151) (12,140) (13,111) (14,157)
     (15,119) (16,96) (17,89) (18,97) (19,62) (20,52) (21,45) (22,17) (23,21) (24,20) (25,9) (26,3)};
\end{axis}
\end{tikzpicture}
\caption{Circuit degree frequency.}
\label{fig:pl4Circs}
\end{minipage} \begin{minipage}{.5\textwidth}
\centering
\begin{tikzpicture}
\begin{axis}[ybar, x tick label style={/pgf/number format/1000 sep=}, bar width=7pt]
\addplot plot coordinates
{(1,18) (2,43) (3,61) (4,62) (5,37) (6,73) (7,33) (8,29) (9,28) (10,16) (11,18) (12,20)
       (13,2) (14,2) (15,4) (16,7) (17,4) (18,4) (19,0) (20,1) (21,0) (22,0) (23,1) (24,1)};
\end{axis}
\end{tikzpicture}
\caption{Base degree frequency.}
\label{Fig:pl4bases}
\end{minipage}
\end{figure}

\begin{comment}
\begin{tikzpicture}
\begin{axis}[ybar, x tick label style={/pgf/number format/1000 sep=}, bar width=7pt]
\addplot plot coordinates
{(0,324) (1,924) (2,1356) (3,1404) (4,888) (5,1722) (6,756) (7,678) (8,672) (9,354) (10,408)
        (11,480) (12,48) (13,48) (14,96) (15,156) (16,96) (17,96) (18,0) (19,24) (20,0) (21,0)
        (22,24) (23,6)};
\end{axis}
\end{tikzpicture}
\end{comment}

The highest base degree is $24$, which is also the cardinality of the matroid, and the size of the symmetry group. The base of degree 24 is $\{p_{1234}, p_{2341}, p_{3412}, p_{4123}\}$. In other words, pick a ranking and apply the 4-cycle to it.  The degree of the variety, which tells us the degree of a fiber under generic projection, is $27$, indicating that all of the coordinate projections are ``special."
\end{enumerate}
Knowing about the decorated bases and circuits of this matroid allows us to understand its coordinate projections, and gives valuable information about reconstructing partial data.

\end{ex}

Another application of matroids to algebraic statistics is in the study of mixture models. The $r$-th mixture model of a pair of discrete random variables $X$ and $Y$, with $m$ and $n$ states respectively, models the situation where $X\independent Y$ conditional on a ``hidden" variable $Z$ which occupies $r$ states. In \cite{KRS14}, the algebraic boundary of the mixture model for $m = n = 4$ and $r =3$ is computed; it has 288 components, one of which is analyzed in the example. Studying this example gives insight into the combinatorics of \emph{all} of the components of the variety, in addition to the independence structure of this particular component.

\begin{ex}[Mixture Model Matroid] \label{kaie} $\begin{array}{|ccccc|} \hline
\mc{M}(I_{mix}): & |E| = 16, & \rho = 14, & |\mc{B}| = 112, & |\mc{C}| = 11. \\ \hline
\end{array}$
We examine one of the components of the algebraic boundary of the mixture model of rank $3$ for $4 \times 4$ matrices, as defined in \cite[Example 5.2]{KRS14}. Let $I_{mix}$ denote the defining ideal of this component; $I_{mix}$ is generated by the $4 \times 4$ minors of the following matrix:
\[ \left( \begin{array}{ccccc}
p_{11} & p_{12} & p_{13} & p_{14} & 0 \\
p_{21} & p_{22} & p_{23} & p_{24} & 0 \\
p_{31} & p_{32} & p_{33} & p_{34} & p_{33} (p_{11}p_{22} - p_{12}p_{21}) \\
p_{41} & p_{42} & p_{43} & p_{44} & p_{41} (p_{12} p_{23} - p_{13} p_{22}) + p_{43} (p_{11}p_{22} - p_{12}p_{21})
\end{array} \right)
\]
\begin{enumerate}[1.]
\item \emph{Decorated Bases}:
The base enumeration in this case is very quick. There are $120$ subsets of size $14$; checking all of them took $0.5$ seconds to run in Macaulay2 before returning a list of $112$ bases.

The cobases are the pairs of variables for which the corresponding edge {\bf is not} one of the 8 edges in Figure \ref{Fig:mix}.

\begin{figure}[h]
\begin{minipage}{.5\textwidth}
\centering
\includegraphics[scale=1]{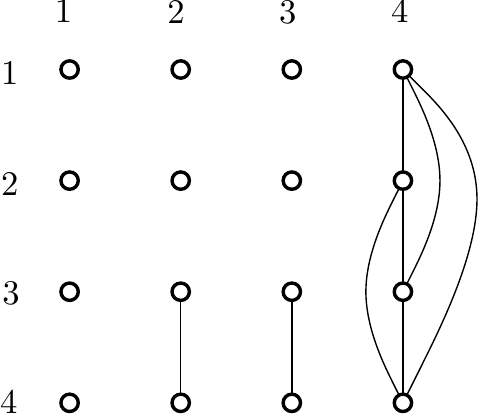}
\caption{Non-cobases of $\mc{M}(I_{mix})$.}
\label{Fig:mix}
\end{minipage} \begin{minipage}{.48\textwidth}
\centering
\includegraphics[scale=1]{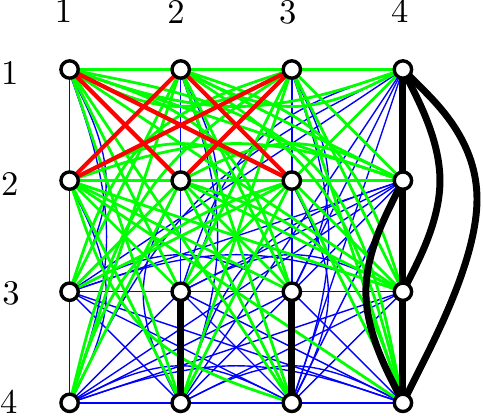}
\caption{Base degrees for $\mc{M}(I_{mix})$.}
\label{Fig:mix2}
\end{minipage}
\end{figure}

Computation of base degree yields the following numbers:
\[ \begin{array}{|l|ccc|} \hline
\text{\bf Base Degree} & 1 & 2 & 3 \\ \hline
\# \: \text{\bf  of Bases} & 52 & 54 & 6 \\ \hline
\end{array} \]
The blue edges in Figure \ref{Fig:mix2} indicate the complements of degree-$1$ bases, the green edges are the degree-$2$ bases, and the six red edges are the degree-$3$ bases.

\vspace{5mm}

\item \emph{Decorated Circuits}: The matroid has $11$ circuits, which can be read from Figure~\ref{Fig:mix} as the complements of each connected component of the graph; this coincidence is because the cohyperplanes have rank one. The computation checking all sets of size $\leq 11$ took 3090 seconds. When we started instead from the fundamental circuits associated to some base, and checked mutual eliminations (the Boros et al algorithm), took only 2.7 seconds to produce all circuits, and another 293 seconds to verify that the list was complete. This is one example where, due to high rank, the alternative circuit enumeration algorithm is preferred. The circuit polynomials are too big to record here: the number of terms in each polynomial are 24, 27, 27, 19, 150, 136, 24, 136, 150, 150, and 150, respectively. Instead, we record relevant statistics:

\begin{longtable}{|l|l|l|} \hline
{\bf Circuit Complement} & {\bf Circuit Top-Degree} & {\bf Circuit Degree} \\ \hline
$p_{32}, p_{42}$ &$(2,1,2,1,2,1,2,1,1,0,1,1,1,0,1,1)$ & $6$ \\
$p_{41}$ & $(1,2,2,1,1,2,2,1,1,1,1,1,0,1,1,1)$ &$6$\\
$p_{31}$ & $(1,2,2,1,1,2,2,1,0,1,1,1,1,1,1,1)$ & $6$\\
$p_{34}, p_{44}, p_{14},p_{24}$ & $(2,2,2,0,2,2,2,0,1,1,1,0,1,1,1,0)$ & $6$\\
$p_{22}$ & $(3,1,3,2,2,0,2,2,2,1,2,2,2,1,2,2)$ & $9$\\
$p_{21}$ & $(1,3,3,2,0,2,2,2,1,2,2,2,1,2,2,2)$ &$9$\\
$p_{33}, p_{43}$ &  $(2,2,1,1,2,2,1,1,1,1,0,1,1,1,0,1)$ & $6$\\
$p_{11}$ & $(0,2,2,2,1,3,3,2,1,2,2,2,1,2,2,2)$ & $9$\\
$p_{12}$ &   $(2,0,2,2,3,1,3,2,2,1,2,2,2,1,2,2)$ & $9$\\
$p_{13}$ & $(2,2,0,2,3,3,1,2,2,2,1,2,2,2,1,2)$ & $9$\\
$p_{23}$ &  $(3,3,1,2,2,2,0,2,2,2,1,2,2,2,1,2)$ & $9$ \\\hline
\end{longtable}

\vspace{-2mm}

These circuit statistics tell us how many completions are possible for every projection.
\end{enumerate}
The combinatorial characterization of this \emph{component} also carries information for the \emph{global} structure. Consider the action of $S_4 \times S_4 \times \ZZ_2$ on the labeled graph of Figure \ref{Fig:mix}. The orbit of the graph has cardinality 144. 
Indeed, in \cite[Example 5.2]{KRS14}, it is shown that the $288$ components are paired by taking the transpose of the factorization, and both components in a given pair have the same matroid.

\vfill
\pagebreak
\noindent 
\end{ex}

%\vfill

\subsection{Chemical Reaction Networks}
In the algebraic study of chemical reaction networks (CRNs), steady-state concentrations of chemical species lie in some algebraic variety. The matroid associated to this variety may be used to design an experiment where measurements of each coordinate are obtained only through some specified costs.  Then, bases of the matroid would be appropriate to measure if the goal is to find all concentrations; if we aim to test the validity of the model, a circuit may be a good choice for model rejection. (For more details, see the author's upcoming article ``Matroids for Experimental Design" with Harrington.)

\begin{ex}[MAPK Network]  $\begin{array}{|ccccc|} \hline
\mc{M}(I_{MAPK}): & |E| = 12, & \rho = 3, & |\mc{B}| = 190, & |\mc{C}| = 303. \\ \hline
\end{array}$

This ideal comes from \cite{MG08}, which analyzes the polynomials defining the steady-state of a certain CRN. Each variable corresponds to the concentration of some chemical species:

\[ R = \RR[KS_{00}, KS_{01}, KS_{10}, FS_{01}, FS_{10}, FS_{11}, K, F, S_{00}, S_{01}, S_{10}, S_{11}].\]

 \begin{scriptsize}
 $I_{MAPK} =  \langle a_{00}\cdot K\cdot S_{00}+b_{00}\cdot KS_{00}+\gamma_{0100}\cdot FS_{01}  +\gamma_{1000}\cdot FS_{10}+\gamma_{1100}\cdot FS_{11},  
 -a_{01}\cdot K\cdot S_{01}+b_{01}\cdot KS_{01}+c_{0001}\cdot KS_{00}  -\alpha_{01}\cdot F\cdot S_{01}+\beta_{01}\cdot FS_{01}+\gamma_{1101}\cdot FS_{11},  
 -a_{10}\cdot K\cdot S_{10}+b_{10}\cdot KS_{10}+c_{0010}\cdot KS_{00}  -\alpha_{10}\cdot F\cdot S_{10}+\beta_{10}\cdot FS_{10}+\gamma_{1110}\cdot FS_{11},  
 -\alpha_{11}\cdot F\cdot S_{11}+\beta_{11}\cdot FS_{11}  +c_{0111}\cdot KS_{01}+c_{1011}\cdot KS_{10}+c_{0011}\cdot KS_{00},  
 a_{00}\cdot K\cdot S_{00}-(b_{00}+c_{0001}+c_{0010}+c_{0011})\cdot KS_{00},   
a_{01}\cdot K\cdot S_{01}-(b_{01}+c_{0111})\cdot KS_{01}, 
 a_{10}\cdot K\cdot S_{10}-(b_{10}+c_{1011})\cdot KS_{10},  
\alpha_{01}\cdot F\cdot S_{01}-(\beta_{01}+\gamma_{0100})\cdot FS_{01},  
 \alpha_{10}\cdot F\cdot S_{10}-(\beta_{10}+\gamma_{1000})\cdot FS_{10}, 
 \alpha_{11} \cdot F\cdot  S_{11}-(\beta_{11}+\gamma_{1101}+\gamma_{1110}+\gamma_{1100})\cdot  FS_{11},  
 -a_{00}\cdot  K\cdot  S_{00}+(b_{00}+c_{0001}+c_{0010}+c_{0011})\cdot KS_{00}
 -a_{01}\cdot K\cdot S_{01} + (b_{01}+c_{0111})\cdot KS_{01}-a_{10}\cdot K\cdot S_{10}  +(b_{10}+c_{1011})\cdot KS_{10},  
 -\alpha_{01}\cdot F\cdot S_{01}+(\beta_{01}+\gamma_{0100})\cdot FS_{01}-\alpha_{10}\cdot F\cdot S_{10}+  (\beta_{10} +\gamma_{1000})\cdot FS_{10}-\alpha_{11}\cdot F\cdot S_{11} +  (\beta_{11}+\gamma_{1101}+\gamma_{1110}+\gamma_{1100})\cdot FS_{11} \rangle.$
\end{scriptsize} 

\vspace{5mm}

The $a,b, c, \alpha,\beta$, and $\gamma$ constants are taken to be random real numbers, i.e. a set of algebraically independent transcendentals over $\QQ$. If the rate constants are originally taken to be part of the matroid, this specialization amounts to matroid contraction. The ideal $I_{MAPK}$ is radical with two associated primes: (1) a variety of degree 10 and dimension 3, and 
(2) a coordinate subspace with ideal $\langle F, K, FS_{11}, FS_{10}, FS_{01}, KS_{1011}, KS_{01}, KS_{00} \rangle$. In the chemical reaction, the latter component to the steady-state achieved by the disappearance of these reactants.
We are interested in the rank $3$ matroid associated to the former component. A quick symbolic calculation determines that the matroid has affine representation as in Figure \ref{Fig:mapk}.

\begin{figure}[!h]
\includegraphics[scale=0.8]{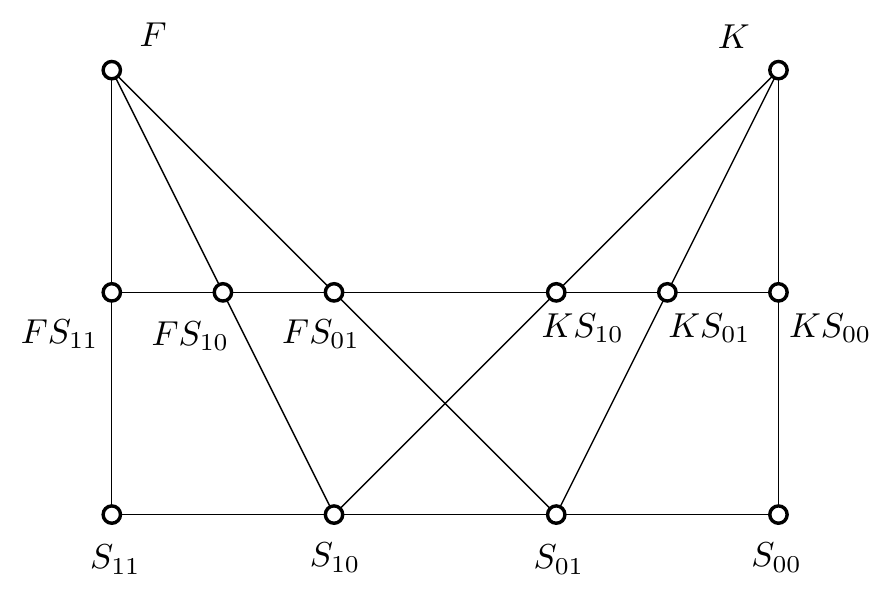}
\caption{Affine representation of the MAP Kinase matroid.}
\label{Fig:mapk}
\end{figure}

\begin{enumerate}[1.]
\item \emph{Decorated Bases}: Any non-collinear set of $3$ elements from the diagram are a basis of the matroid; there are 190 in total. Of these, $52$ have base degree $1$, $124$ have degree $2$, and $14$ have degree $3$.

\item \emph{Decorated Circuits}: There are circuits of size $3$ and $4$. The size $3$ circuits are the $30$ collinear sets of $3$: these have degree $2$ except for $\{S_{00},S_{01},S_{11}\}$ and $\{S_{00},S_{10},S_{11}\}$, which have degree $3$.

There are $273$ circuits of size $4$; the degrees of the circuit polynomials have the following frequencies:

\[ \begin{array}{|l|ccccc|} \hline
\text{\bf Circuit Degree} & 2 & 3 & 4 & 5 & 6 \\ \hline
\# \: \text{\bf  of Circuits} & 13 & 76 & 125 & 49 & 10 \\ \hline
\end{array} \]

\end{enumerate}
Possessing this data aids in experimental design, as mentioned above; however, it also distills the combinatorial essence of a chemical reaction network. This demonstrates the power of algebraic matroids in summarizing structure.
\end{ex}

\subsection{The Grassmannian} 

In algebraic geometry and representation theory, some important objects have a distinguished set of coordinates. For the Grassmannian $Gr(r,n)$, the Pl\"{u}cker coordinates are the variables of choice. When $r  = 2$, the Grassmannian is defined by skew-symmetric $n\times n$ matrices; this is thematically similar to the content of \cite{KRT13}. So, we examine the next case of interest: $r =3$. 

\begin{ex} $\begin{array}{|ccccc|} \hline
\mc{M}(Gr(3,6)): & |E| = 20, & \rho = 10, & |\mc{B}| = 184,590, & |\mc{C}| = 51,005. \\ \hline
\end{array}$

$Gr(3,6)$ is the variety of $3$-dimensional subspaces of $\CC^6$, with coordinates given by the Pl\"{u}cker coordinates $p_{ijk}$, with $1 \leq i,j,k \leq 6$, distinct.
The ideal of the Grassmannian is generated by $35$ Pl\"{u}cker relations of degree $2$.
\begin{enumerate}[1.]
\item \emph{Decorated Bases}: The bases are sets of size $10$. Computation is aided here by using {\tt Sage} to give only one representative of each class up to the $S_6$ action on the Grassmannian. The rest is carried out in $7$ seconds by {\tt Macaulay2}. There are $197$ base classes of degree $1$, $42$ of degree $2$, two of degree $3$, and one of degree $7$.

%\[\begin{array}{|r|ccccccc|} \hline \text{\bf Base Degree} & 1&  2 & 3 &4 &5 &6 &7 \\ \hline
%\text{\bf \# of Bases} &197 & 42 & 2 &0 & 0 & 0 & 1\\ \hline
%\end{array}\]

The degree-$7$ base appears to be an outlier, so further examination seems appropriate. The appearing variables correspond to the triangles in the beautifully symmetric complex in Figure \ref{Fig:grBase}. This image is familiar as a minimal triangulation of $\RR \PP^2$; we plan to explore the connection between this image and high-degree projections of the Grassmannian in future work.

%\vspace{-2mm}
\begin{figure}[!h]
\centering
\includegraphics[scale=.52]{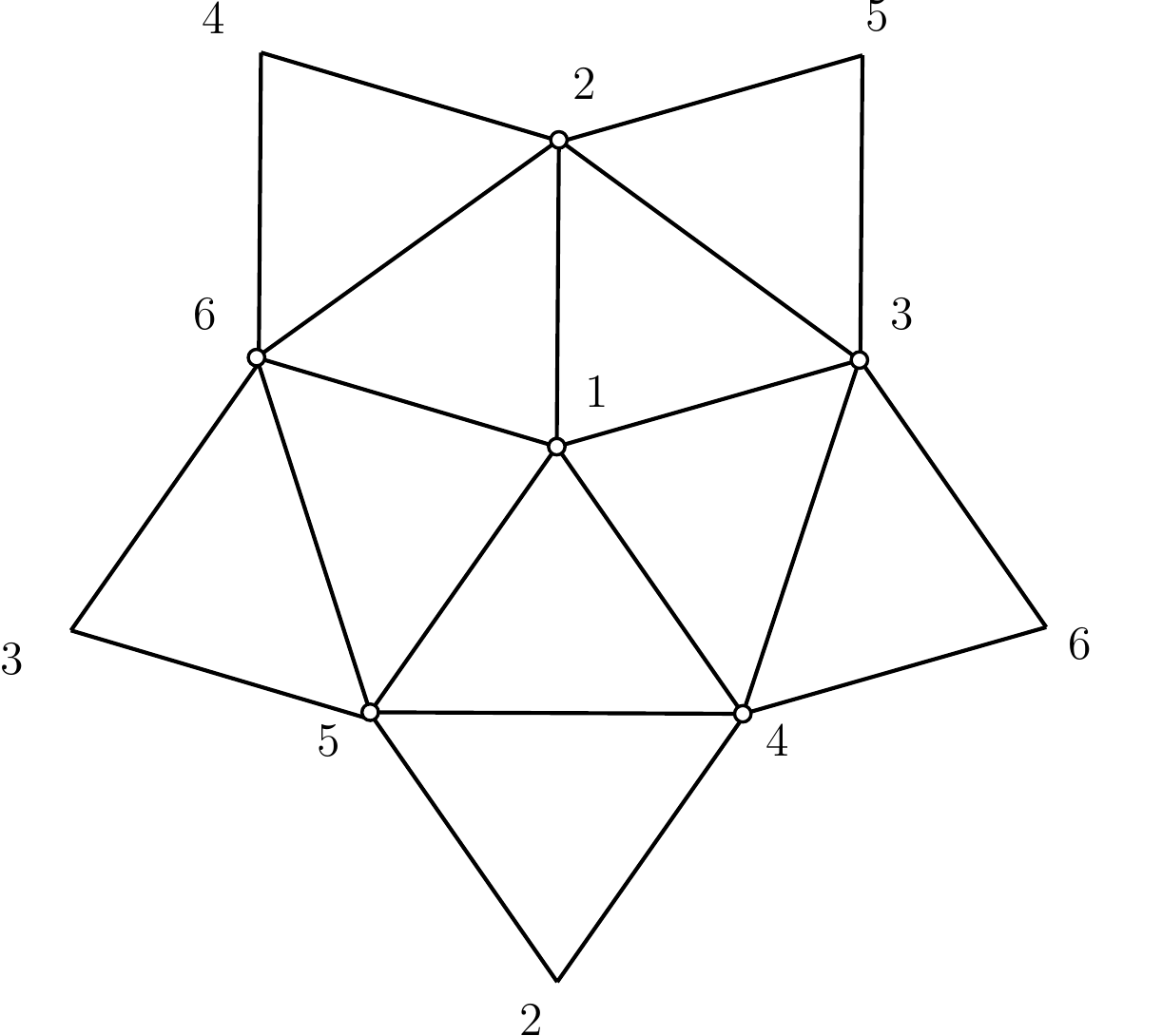}
\caption{Base of $\mc{M}(Gr(3,6))$ with base degree $7$.}
\label{Fig:grBase}
\end{figure}

\item \emph{Decorated Circuits}: For circuit computation, {\tt Sage} once again proved vital in cutting down the number of required tests by a factor of approximately $6!$. The testing on the circuit class representatives took $55$ seconds in Macaulay2 before returning the list of circuits.  There are $97$ total circuit classes, with degree of circuit polynomials distributed as in Figure \ref{grCircs}. Taking into account the full orbits of each circuit, we have $51,005$ total circuits. The only circuit class of degree $12$ is obtained from our special base by adding one triangle, e.g. the variable $p_{456}$ as in Figure \ref{Fig:grCirc}.

%\vspace{-2mm}

\begin{figure}[!h]
\begin{minipage}{.48\textwidth}
\centering
\begin{tikzpicture}
\begin{axis}[height = 6cm, ybar, x tick label style={/pgf/number format/1000 sep=}, bar width=11pt]
\addplot plot coordinates
{(2, 2) (3, 8) (4, 22) (5, 40) (6, 14) (7, 9) (8, 1) (9, 0) (10, 0) (11, 0) (12, 1)};
\end{axis}
\end{tikzpicture}
\caption{Circuit degree frequency.}
\label{grCircs}
\end{minipage} \begin{minipage}{.5\textwidth}
\centering
\includegraphics[scale=.5]{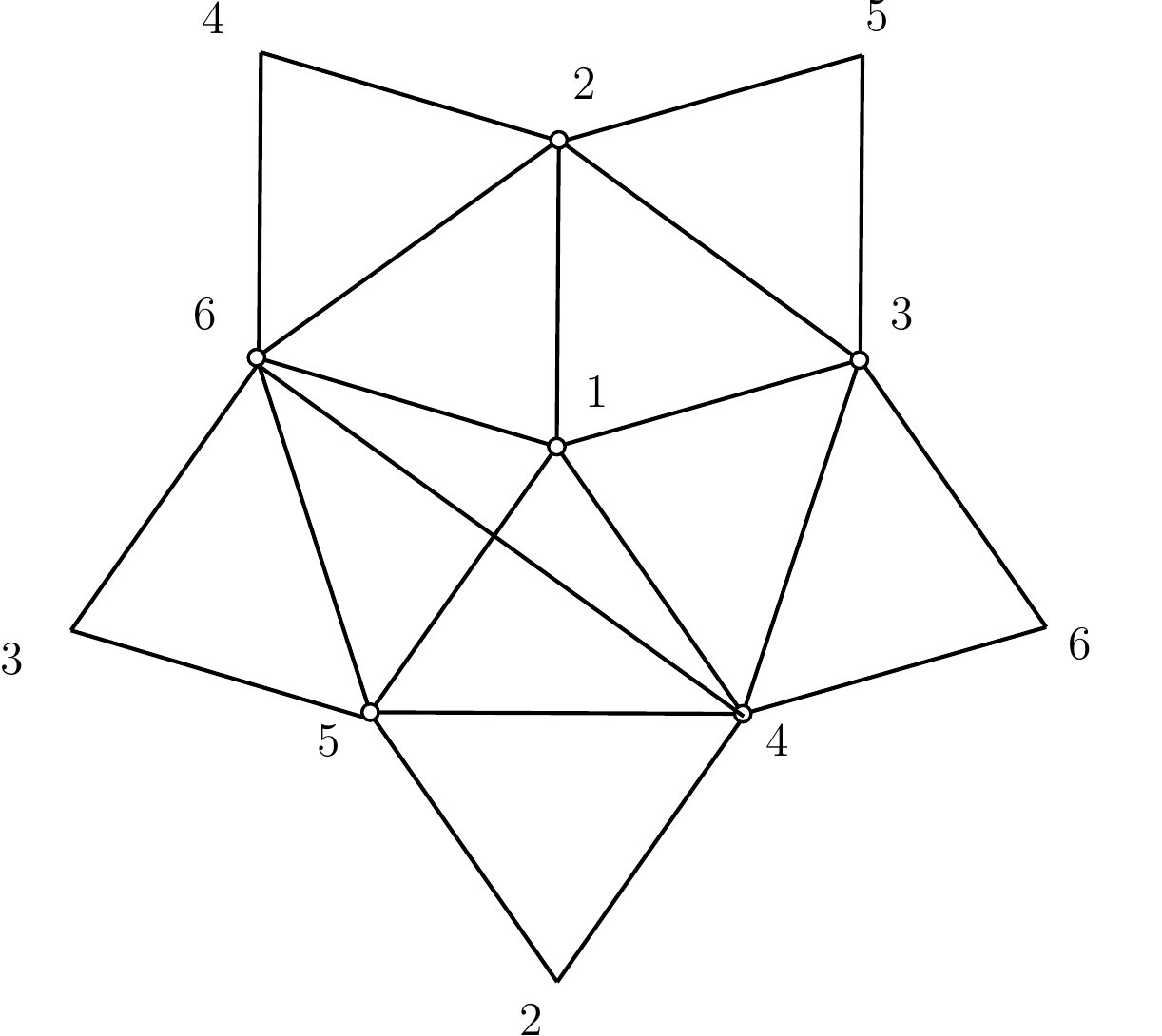}
\caption{Circuit of $\mc{M}(Gr(3,6))$ with degree $12$.}
\label{Fig:grCirc}
\end{minipage}
\end{figure}
\end{enumerate}

\end{ex}

%\vspace{-6mm}

\subsection{Matroid Representations} \label{sec:matrep} There is a small collection of algebraic matroids over finite fields that are not representable as linear matroids.  Note that base degree is not well-defined for these algebraic matroids, since a ``generic fiber" is not well-defined. Still, computation of the corresponding ideal with circuit polynomials can give insight into the structure of the matroid.  One such matroid is explored in the example:

\begin{ex}[Non-Pappus Matroid]  $\begin{array}{|ccccc|} \hline
\mc{M}(I): & |E| = 9, & \rho = 3, & |\mc{B}| = 76, & |\mc{C}| = 86. \\ \hline
\end{array}$

\noindent The \emph{non-Pappus Matroid} is algebraic over every finite field while not being linearly representable over any field. Since linear representability $\iff$ algebraic representability for fields of char. $0$, this is as extreme as a matroid can be.

\begin{figure}[!h] \label{Fig:nonPappus}
\includegraphics[scale=.7]{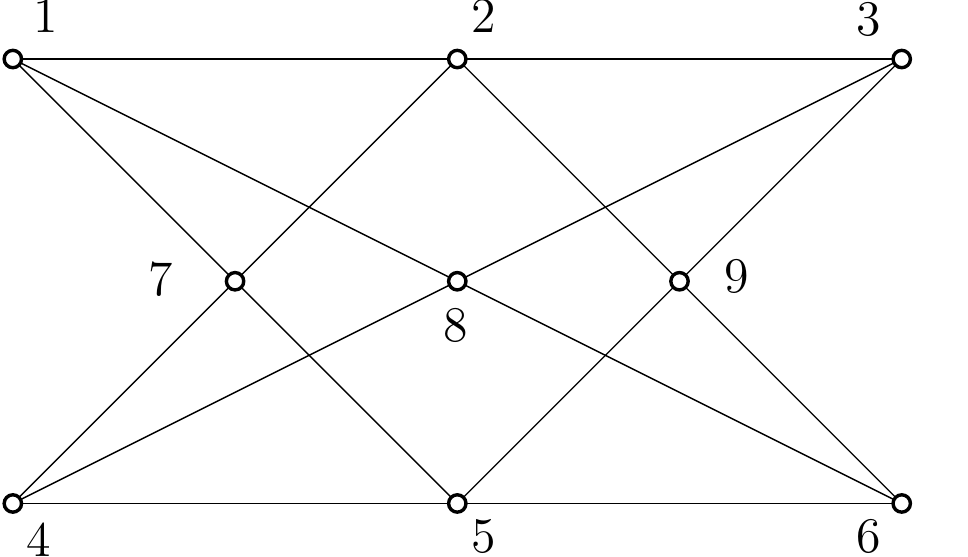}
\caption{Non-Pappus Matroid.}
\end{figure}

\centerline{
\begin{minipage}{0.4\textwidth}
Algebraic Matroid over $\FF_4 \: (\lambda \neq \lambda^2)$:
\[
\begin{array}{rcl}
\varphi(1) & = & x^2 + y, \\
\varphi(2) & = &	  x, \\
\varphi(3) & = &	  x + y,\\ 
\varphi(4) & = &	  y + z, \\
\varphi(5) & = &	  y +  \lambda z, \\
\varphi(6) & = &	  z, \\
\varphi(7) & = &	  ( \lambda-1) x^2 +  \lambda   y +  \lambda   z,\\
\varphi(8) & = &	  x^2 + y + z - z^2,\\
\varphi(9) & = &   \lambda z - x.\\
\end{array}
\]
\end{minipage} \hspace{5mm}
\begin{minipage}{0.4\textwidth}
Algebraic Matroid over $\FF_2$:
\[
\begin{array}{rcl}
\varphi(1) & = & x, \\
\varphi(2) & = &	  x+y, \\
\varphi(3) & = &	 y, \\ 
\varphi(4) & = &	  x + y + \frac{xz}{x+y}, \\
\varphi(5) & = &	  z, \\
\varphi(6) & = &	    x + y + \frac{yz}{x + y} , \\
\varphi(7) & = &	  xz,\\
\varphi(8) & = &	  xy +  \frac{xyz}{x+y} ,\\
\varphi(9) & = &  yz.\\
\end{array}
\]
\end{minipage}}

\vspace{3mm}

The algebraic representation on the right was used by Lindstrom in \cite{Lind83} to prove that the non-Pappus matroid is algebraic. The algebraic representation on the left is a valid algebraic representation over $\FF_{p^2}$ for any $p$ prime, used in \cite{Lind86} to prove an infinite algebraic characteristic set. We can be a bit more precise in assessing these matroid representations, by computing the implicit ideal of each.

The representation over $\FF_2$ has defining ideal generated as:
\[\langle t_4+t_5+t_6, t_1+t_2+t_3, t_5t_8+t_3t_9+t_5t_9+t_6t_9, t_3t_7+t_2t_9+t_3t_9, \] \vspace{-6mm} \[ t_2t_7+t_6t_7+t_2t_9+t_5t_9+t_6t_9, t_3t_5+t_9, t_2t_5+t_7+t_9, t_3^2+t_3t_6+t_8+t_9, t_2^2+t_2t_6+t_9 \rangle. \]
Compiling the degrees of the circuit polynomials, we have:
\[ \begin{array}{|l|ccccccc|} \hline
\text{\bf Degree} & 1 & 2 & 3 & 4 & 5 & 6 & 7 \\ \hline
\# \: \text{\bf  of Circuits} & 2 & 33 & 24 & 21 & 4 & 0 & 2 \\ \hline
\end{array} \]

The representation over $\FF_4$ has defining ideal generated as:
\[ \langle t_4+t_5+(\lambda+1)t_6, t_3+t_5+t_9, t_2+\lambda t_6+t_9, t_1+\lambda t_5+\lambda t_7,\] \vspace{-6mm}  \[ t_9^2+\lambda t_5+t_6+(\lambda+1)t_7+(\lambda+1)t_8, t_6^2+\lambda t_5+t_6+\lambda t_7+t_8 \rangle .\]
The degrees of the circuit polynomials appear with the following frequency:
\[ \begin{array}{|l|cccc|} \hline
\text{\bf Degree} & 1 & 2 & 3 & 4  \\ \hline 
\# \: \text{\bf  of Circuits} & 12 & 59 & 0 & 15 \\ \hline
 \end{array} \]

Further examination of the {\it decorated} algebraic matroid may give insight into the various possible representations of this and similar nonlinear matroids.

\end{ex}

{\bf Acknowledgments}: Thanks to Bernd Sturmfels for suggesting the project and providing most of the examples. Thanks to Franz Kir\'{a}ly, Louis Theran, and Alex Fink for helpful conversations.  Thanks to Dan Bates, Jon Hauenstein, and Jose Rodriguez for their help with Bertini, and to Ariel Allon and Shivaram Lingamneni for their expert coding advice. This research was supported by the NSF through grant DMS-0968882, and by the Max Planck Institute for Mathematics in Bonn.

\bibliography{compbib}{}
\bibliographystyle{plain}

\end{document}